%
\documentclass[twoside,leqno,11pt]{article}
\usepackage{revstat}


\newcommand{\be}{\begin{equation}} 
\newcommand{\ee}{\end{equation}}
\newcommand{\beq}{\begin{eqnarray}}
\newcommand{\eeq}{\end{eqnarray}}
\newcommand{\nbeq}{\begin{eqnarray*}}
\newcommand{\neeq}{\end{eqnarray*}}

\begin{document}

\title{On Arnold-Villase\~{n}or conjectures for characterizing exponential distribution based on sample of size three
}
\renewcommand{\titleheading}
             {Characterizations of exponential distribution}  
\author{\authoraddress{George P. Yanev}
                      {School of Mathematical and Statistical Sciences,\\
                       The University of Texas Rio Grande Valley,\\
                       U.S.A.
                       \ (george.yanev@utrgv.edu)\\
                       and \\
                       Institute of Mathematics and Informatics,\\
                       Bugarian Academy of Sciences,\\
                       Bulgaria}
}
\renewcommand{\authorheading}
             { George P. Yanev }  

\maketitle

\begin{abstract}
Arnold and Villase\~{n}or \cite{AV13} obtain a series of characterizations of the exponential distribution based on random samples of size two. These results were already applied in constructing goodness-of-fit tests in \cite{JMNOV15}. Extending the techniques from \cite{AV13}, we prove some of Arnold and Villase\~{n}or's conjectures for samples of size three. An example with simulated data is discussed.
\end{abstract}

\begin{keywords}
 exponential distribution; characterizations; order statistics.
\end{keywords}

\begin{ams}
62G30, 62E10.
\end{ams}

\mainpaper  

\section{Introduction}
\label{intro}
In general, the problem of characterization of probability distributions is described as follows. Suppose a family of distributions $\mathcal{F}$ possesses a property $\mathcal{A}$. If, conversely, a distribution has property $\mathcal{A}$ only if it is a member of that family, then property $\mathcal{A}$ characterizes the family $\mathcal{F}$. This result is referred to as a characterization of the distributions in $\mathcal{F}$. Primary motivation for characterizations problems is due to statistical applications. If a statistical procedure assumes that property $\mathcal{A}$ holds, then the underlying distribution must be a member of the family $\mathcal{F}$. Naturally, first characterizations results are for the normal family of distributions. The exponential distribution is one of the non-normal distributions, which has received a lot of attention as well. Comprehensive surveys of exponential characterizations can be found in \cite{A17}, \cite{AH95}, \cite{AV86}, \cite{GKS98}, and \cite{N06}.

More recently, Arnold and Villase\~{n}or \cite{AV13} obtained a series of characterizations of the exponential distribution based on random samples of size two and conjectured possible generalizations for samples of size three. They provide motivation for their results by pointing out an example of a goodness-of-fit construction. A test for exponentiality based on the characterizations in \cite{AV13} was recently constructed in \cite{JMNOV15}. Another possible use of the results in \cite{AV13} and their generalizations, is in verifying modeling assumptions and in simulations (see also \cite{N06}).  Extending the techniques from \cite{AV13}, we will prove some of Arnold and Villase\~{n}or's conjectures. 

Assume throughout that $X_1, X_2$, and $X_3$ are independent random variables with a common absolutely continuous cumulative distribution function (cdf) $F$, such that $F(0)=0$ and probability density function (pdf) $f$. Denote $X_{2:2}:=\max\{X_1, X_2\}$, $X_{3:3}:=\max\{X_1, X_2, X_3\}$, and $\bar{F}=1-F$. Consider the relations:
\be \label{eqn_1}
\sum_{j=1}^3 \frac{1}{j}X_j \quad \mbox{has pdf}\quad \sum_{j=1}^3 {3 \choose j}(-1)^{j-1}jf(jx),
\ee
\be \label{eqn_2}
X_{3:3}\quad \mbox{has pdf}\quad  \sum_{j=1}^3 {3 \choose j}(-1)^{j-1}j\bar{F}(jx), 
\ee
\be  \label{eqn_3}
\sum_{j=1}^3 {3 \choose j}(-1)^{j-1}jf(jx)=\sum_{j=1}^3 {3 \choose j}(-1)^{j-1}j\bar{F}(jx),
\ee
\be  \label{eqn_4}
X_{2:2}+\frac{1}{3}X_3\stackrel{d}{=} X_{3:3} \quad \mbox{and}\quad \sum_{j=1}^3 \frac{1}{j}X_j\stackrel{d}{=} X_{3:3},
\ee
where $\stackrel{d}{=}$ denotes equality in distribution. We will prove, under some regularity assumptions on $F$, that each one of these five conditions, on its own, is sufficient for $X_1, X_2$, and $X_3$ to be exponentially distributed.

\begin{figure}		\includegraphics[width=450pt,height=260pt]{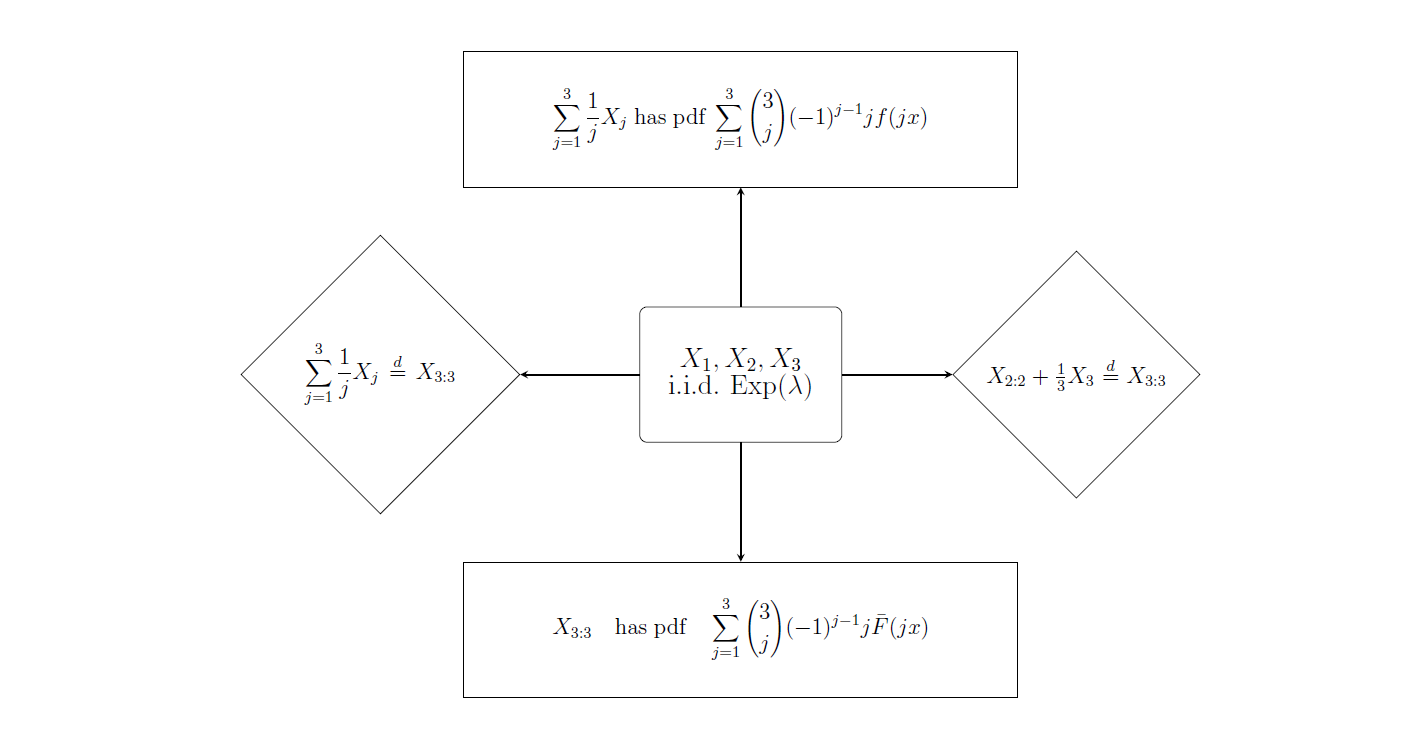}
		\end{figure}
    
We organize this paper as follows. Using Laplace transforms, in Section~\ref{second} we prove the characterization (\ref{eqn_1}). In Section~\ref{third}, we establish characterization (\ref{eqn_2}) utilizing the Taylor series expansion of the cdf $F$. In Section~\ref{fourth}, using a recurrent relation, we prove that (\ref{eqn_3}) is a sufficient condition for having exponential parent. Section~\ref{fifth} contains characterization results based on (\ref{eqn_4}).  In Section~6 we provide an example with simulated data. In the concluding section, we discuss possible extensions of the given results.

\section{Sum of three independent variables}\label{second}
To prove that (\ref{eqn_1}) characterizes the exponential distribution, we will convert it into an equation for the Laplace transform $\varphi(t):=E[e^{-tX_1}]$. 

\begin{theorem}\label{thm1} Assume $\varphi(t)$ is finite for all $t$ in a neighbourhood of zero. If for $x>0$
\be \label{density_14}
\sum_{j=1}^3 \frac{1}{j}X_j \quad \mbox{has pdf}\qquad \sum_{j=1}^3 {3 \choose j} (-1)^{j-1}jf(jx),
\ee
then $X_1\sim \exp(\lambda)$ for some $\lambda>0$.
\end{theorem}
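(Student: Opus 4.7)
The plan is to convert the distributional identity (\ref{density_14}) into a single functional equation for $\varphi$ and then analyze that equation via a power series expansion. Since $X_1+X_2/2+X_3/3$ is a sum of independent variables, its Laplace transform factors as $\varphi(t)\varphi(t/2)\varphi(t/3)$. On the other side, $jf(jx)$ is the density of $X_1/j$, so after the substitution $u=jx$ the Laplace transform of $\sum_{j=1}^{3}\binom{3}{j}(-1)^{j-1}jf(jx)$ equals $\sum_{j=1}^{3}\binom{3}{j}(-1)^{j-1}\varphi(t/j)=3\varphi(t)-3\varphi(t/2)+\varphi(t/3)$. Hence (\ref{density_14}) is equivalent to
\[
\varphi(t)\varphi(t/2)\varphi(t/3) \;=\; 3\varphi(t)-3\varphi(t/2)+\varphi(t/3) \qquad (\ast)
\]
on a neighbourhood of $0$, and one checks directly that $\varphi_\lambda(t)=\lambda/(\lambda+t)$ satisfies $(\ast)$ for each $\lambda>0$.

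By the finiteness hypothesis $\varphi$ is analytic near $0$, so we may write $\varphi(t)=1+\sum_{n\ge 1}c_n t^n$ with $c_n=(-1)^n\mu_n/n!$ and match the coefficient of $t^n$ on both sides of $(\ast)$. Among the triples $(a,b,c)$ with $a+b+c=n$, only those in which one index equals $n$ (forcing the other two to vanish) contribute pure $c_n$ terms, with weights $1$, $2^{-n}$ and $3^{-n}$; every remaining triple has all indices at most $n-1$. Collecting yields the recursion
\[
2\bigl(1-2^{1-n}\bigr)\,c_n \;=\; P_n(c_1,\ldots,c_{n-1}),
\]
where $P_n$ is an explicit polynomial in the lower-order coefficients. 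The prefactor $2(1-2^{1-n})$ vanishes only at $n=1$, so $c_1$ (equivalently $\mu_1$) is a free parameter, while for every $n\ge 2$ the coefficient $c_n$ is uniquely determined by $c_1,\ldots,c_{n-1}$.

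To finish, note that $\varphi_\lambda$ with $\lambda=-1/c_1=1/\mu_1>0$ is itself an analytic solution of $(\ast)$, and its coefficients $c_n^\lambda=c_1^n$ must satisfy the same recursion with the same starting value $c_1$. Induction then forces $c_n=c_1^n$ for all $n\ge 1$, so $\varphi(t)=1/(1-c_1 t)=\lambda/(\lambda+t)$, and uniqueness of the Laplace transform gives $X_1\sim\exp(\lambda)$. The one technical point that needs attention is that the prefactor $2(1-2^{1-n})$ really is nonzero for every $n\ge 2$, which is what makes the recursion well-posed; once this is secured, the remaining work is a careful multinomial bookkeeping to identify $P_n$.
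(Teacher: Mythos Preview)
Your argument is correct and follows the same opening move as the paper---turning (\ref{density_14}) into the Laplace-transform identity $(\ast)$ and then expanding in a power series near $0$---but diverges in how the series analysis is carried out.

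The paper passes to the reciprocal $\alpha(t)=1/\varphi(t)$. Dividing $(\ast)$ by $\varphi(t)\varphi(t/2)\varphi(t/3)$ turns it into
\[
1=\alpha(t)\alpha(t/2)-3\alpha(t)\alpha(t/3)+3\alpha(t/2)\alpha(t/3),
\]
which is only \emph{quadratic} in $\alpha$. A single Cauchy product then gives, for $k\ge 2$, a linear equation whose leading coefficient is $(1-2^{1-k})$ and whose lower-order part is seen to vanish once $a_2=\cdots=a_{k-1}=0$; hence $a_k=0$ for all $k\ge 2$ and $\alpha(t)=1+\lambda t$ directly.

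You keep $\varphi$ itself, so the left side of $(\ast)$ is a triple Cauchy product. Your computation of the leading coefficient $2(1-2^{1-n})$ is right (it comes from $(3-3\cdot 2^{-n}+3^{-n})-(1+2^{-n}+3^{-n})$), and your uniqueness-of-recursion argument---matching against the known solution $\varphi_\lambda$ with the same $c_1$---sidesteps any need to identify $P_n$ explicitly. This is a clean trade: the paper buys a simpler Cauchy product and an explicit vanishing result at the cost of the substitution $\alpha=1/\varphi$; you buy a direct attack at the cost of a cubic convolution and a comparison step. Both leading coefficients are nonzero for $n\ge 2$ for the same reason, $2^{1-n}<1$, so the inductions are equally well-posed.
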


\begin{proof} It follows by (\ref{density_14}), interchanging the order of summation and integration, that 
\beq  \label{s_d_eqn1}
\varphi(t)\varphi\left(\frac{t}{2}\right)\varphi\left(\frac{t}{3}\right) & = & E\left[ 
e^{-t\sum_{j=1}^3 \frac{1}{j}X_j}\right] \\
    & = & \int_0^\infty e^{-tx}\left(\sum_{j=1}^3 {3 \choose j} (-1)^{j-1}jf(jx)\right)\, dx  \nonumber \\
    & = & \sum_{j=1}^3 {3 \choose j} (-1)^{j-1} \int_0^\infty e^{-tx}jf(jx)\, dx  \nonumber \\
    & = & \sum_{j=1}^3 {3 \choose j} (-1)^{j-1}\varphi\left(\frac{t}{j}\right). \nonumber
    \eeq
Dividing  both sides of (\ref{s_d_eqn1}) by $\varphi(t)\varphi(t/2)\varphi(t/3)$, we obtain
\be \label{eqn24}
1=
 \alpha(t)\alpha\left(\frac{t}{2}\right)-3\alpha(t)\alpha\left(\frac{t}{3}\right)+3\alpha\left(\frac{t}{2}\right)\alpha\left(\frac{t}{3}\right),
\ee
where for $t>0$
\be \label{notation4}
\alpha(t):=\frac{1}{\varphi(t)}=\sum_{k=0}^\infty a_kt^k.
\ee
Note that, the series in (\ref{notation4}) is convergent in a neighbourhood of zero, by assumption. 
To prove the theorem, 
it is sufficient to show
that 
\be \label{claim}
\alpha(t)=1+\lambda t, \qquad \lambda>0.
\ee
We will prove 
(\ref{claim}) by calculating the
coefficients of the series in (\ref{notation4}) to be: $a_0=1$, $a_1=\lambda>0$, and $a_k=0$ for $k\ge 2$. It is clear that $a_0=\varphi^{-1}(0)=1$.
Applying Cauchy formula for multiplication of two power series, we have for any nonzero $p$ and $q$,
\be  \label{lemma14}
\alpha\left(\frac{t}{p}
\right)\alpha\left(\frac{t}{q}\right)
 =  \sum_{k=0}^\infty \left( \sum_{j=0}^k \frac{1}{p^jq^{k-j}}a_j a_{k-j}\right) t^k. 
\ee
Now, (\ref{eqn24}) and (\ref{lemma14}) yield for $k\ge 1$
\be \label{coeff}
    \sum_{j=0}^{k}  \left( \frac{1}{2^{k-j}}-\frac{3}{3^{k-j}}+\frac{3}{2^j3^{k-j}}\right)a_ja_{k-j}=0. 
\ee
Setting $k=1$  we see that equation (\ref{coeff}) has as solution any $a_1$. The assumption $F(0)=0$ implies that there is  $\lambda>0$, such that $a_1=\lambda>0$.
If $k=2$, then (\ref{coeff}) yields 
$a_2=0$.
Assuming  $a_{j}=0$ for $2\le j\le k-1$, it follows from (\ref{coeff}) that
\[
\left(1-\frac{1}{2^{k-1}}\right)a_k=0.
\]
Thus, $a_k=0$ for any $k\ge 3$. Therefore,
 (\ref{claim}) holds, which  completes the proof.
 \end{proof}

Note that, conversely,  if $X_i\sim \exp(\lambda)$ for $i=1,2,3$, then (\ref{density_14}) holds true. To show this, it is sufficient to verify (\ref{s_d_eqn1}). Indeed, assuming $X_1\sim \exp(\lambda)$, we have $\varphi(t)=(1+\lambda t)^{-1}$. Therefore,
\nbeq
\int_0^\infty e^{-tx}\left(\sum_{j=1}^3 {3 \choose j} (-1)^{j-1}jf(jx)\right)\, dx & = &  3\varphi(t)-3\varphi\left(\frac{t}{2}\right)+ \varphi\left(\frac{t}{3}\right)\\
& = & \frac{3}{1+\lambda t}-\frac{6}{2+\lambda t}+\frac{3}{3+\lambda t}\\
   & = & \varphi(t)\varphi\left(\frac{t}{2}\right)\varphi\left(\frac{t}{3}\right) \\
   & = & E\left[ e^{-t(X_1+\frac{1}{2}X_2+\frac{1}{3}X_3)}\right], 
    \neeq
which is equivalent to (\ref{density_14}).

\section{Maximum of three independent variables}\label{third}
In this section we will prove that, under some regularity assumptions on $F$,  condition (\ref{eqn_2}) is sufficient for $X_1, X_2$, and $X_3$ to be exponentially distributed. The proof will be based on the Taylor series expansion of $F$.

\begin{theorem}\label{thm2} Assume the cdf  $F$ has a power series representation
for $x$ in a neighborhood of zero. If for $x>0$
\be \label{eqn_21}
X_{3:3}\quad \mbox{has pdf}\quad  \sum_{k=1}^3 { 3 \choose k}(-1)^{k-1}k\bar{F}(kx), 
\ee
then $X_1\sim \exp(1)$.
\end{theorem}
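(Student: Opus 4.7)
The plan is to convert the pointwise density identity into an integrated functional equation amenable to power-series analysis. Since $3F(x)^2 f(x) = (F(x)^3)'$, integrating (\ref{eqn_21}) from $0$ to $x$ (and using $F(0)=0$) produces the equivalent equation
\[
F(x)^3 \;=\; 3G(x) - 3G(2x) + G(3x),
\qquad G(x) := \int_0^x \bar{F}(u)\,du.
\]
Both sides become tractable power series once we expand $F(x) = \sum_{n \ge 1} c_n x^n$ in a neighbourhood of zero, with $c_1 = f(0) > 0$.

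Matching coefficients of $x^m$ for $m \ge 3$ on the two sides yields the recurrence
\[
\sum_{i+j+k = m,\; i,j,k \ge 1} c_i c_j c_k \;=\; \frac{3\,c_{m-1}}{m}\bigl(2^m - 3^{m-1} - 1\bigr).
\]
The triple sum involves only $c_1, \ldots, c_{m-2}$, so the relation determines $c_{m-1}$ uniquely. Starting from $c_2 = -c_1^3/2$ at $m = 3$, I would prove by induction on $n$ the closed form
\[
c_n \;=\; \frac{(-1)^{n+1}}{n!}\, c_1^{\,2n-1}, \qquad n \ge 1.
\]
The induction step uses the multinomial identity $\sum_{i+j+k=m,\; i,j,k\ge 1} m!/(i!j!k!) = 3^m - 3\cdot 2^m + 3$ together with the cancellation $3^m - 3\cdot 2^m + 3 = -3(2^m - 3^{m-1} - 1)$, which collapse the recurrence to the predicted value. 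These coefficients match the Taylor series of the entire function $\tilde{F}(x) := (1 - e^{-c_1^2 x})/c_1$, so $F$ coincides with $\tilde{F}$ in the neighbourhood of zero where the series representation of $F$ is valid.

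To extend $F = \tilde{F}$ to all of $[0,\infty)$ and pin down $c_1$, I would bootstrap using (\ref{eqn_21}) itself: if $F = \tilde{F}$ on $[0,y]$, then for $x \in (y/3, y/2]$ the values $F(x), f(x), \bar{F}(x), \bar{F}(2x)$ all coincide with the corresponding $\tilde{F}$-values, so (\ref{eqn_21}) forces $\bar{F}(3x) = \bar{\tilde{F}}(3x)$, enlarging the agreement to $[0, 3y/2]$. Iteration covers all of $[0,\infty)$. Finally, the cdf normalisation reads $1 = F(\infty) = \tilde{F}(\infty) = 1/c_1$, yielding $c_1 = 1$ and $F(x) = 1 - e^{-x}$, i.e., $X_1 \sim \exp(1)$. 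I anticipate the main obstacle to be the induction verifying the closed form for $c_n$; the bootstrap extension and the final normalisation step are comparatively routine.
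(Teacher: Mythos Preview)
Your approach is correct and follows essentially the same route as the paper: expand $F$ as a power series near zero, derive a recurrence for the coefficients $c_n$, and prove by induction that $c_n=(-1)^{n-1}c_1^{2n-1}/n!$, identifying $F$ with $(1-e^{-c_1^{2}x})/c_1$. The paper works directly with the unintegrated identity $F^2(x)f(x)+F(x)-2F(2x)+F(3x)=0$, obtaining a recurrence that mixes $F^2$ with $f$; your preliminary integration to $F(x)^3=3G(x)-3G(2x)+G(3x)$ is a cosmetic variant that lets you invoke the surjection count $\sum_{i+j+k=m,\;i,j,k\ge 1} m!/(i!\,j!\,k!)=3^m-3\cdot 2^m+3$ in place of the nested double sum the paper evaluates. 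Your bootstrap step, extending $F=\tilde F$ from a neighbourhood of zero to all of $[0,\infty)$ by the factor-$3/2$ iteration, is actually an improvement: the paper passes straight from the local power-series identity to $\lim_{x\to\infty}F(x)=1$ without justifying the global extension, so your version closes a gap the paper leaves open.
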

\begin{proof} The relation (\ref{eqn_21}) implies
\be \label{main}
F^2(x)f(x)+F(x)-2F(2x)+F(3x)=0.
\ee
Since 
$
F(x)=\sum_{k=0}^\infty c_kx^k$ and $f(x)=\sum_{k=0}^\infty (k+1)c_{k+1}x^k
$,
Cauchy formula for the product of three power series yields
\be \label{Cauchy}
F^2(x)f(x)=\sum_{k=0}^
\infty \left[\sum_{i=0}^k \sum_{j=0}^i c_jc_{i-j}(k+1-i)c_{k+1-i}\right]x^k.
\ee
Using (\ref{main}) and (\ref{Cauchy}), we obtain for any $k\ge 0$
\be \label{first_d_k}
\sum_{i=0}^k \sum_{j=0}^i c_jc_{i-j}(k+1-i)c_{k+1-i}+c_k(1-2^{k+1}+3^k)=0.
\ee
Since $F(0)=0$, we have $c_0=0$. Also (\ref{first_d_k}) with $k=1$ yields
$c_0^2c_1=0$, which in turn implies that $c_1$ is undetermined. Let us set $c_1=\delta$, where $-\infty<\delta<\infty$. Equation (\ref{first_d_k}) with $k=2$ yields
$c_1^3+2c_2=0$.
Hence, $c_2=\delta^3/2$.
We will prove by induction that 
\be \label{c_k}
c_k=(-1)^{k-1}\frac{\delta^{2k-1}}{k!}
\qquad k=1,2,3,\ldots.
\ee
Indeed, assuming (\ref{c_k}) holds true for $1,2,\ldots, k$, we have
\be  \label{c_k+1}
 \sum_{i=0}^{k+1} \sum_{j=0}^i c_jc_{i-j}(k+2-i)c_{k+2-i}
     =  \sum_{i=2}^{k+1}\sum_{j=1}^{i-1} \frac{(-1)^{k+1}\delta^{2k+1}}{j!(i-j)!(k+1-i)!}.
\ee
Observe  that
\beq \label{induction}
\hspace{1cm} \sum_{i=2}^{k+1}\sum_{j=1}^{i-1} \frac{1}{j!(i-j)!(k+1-i)!}
   & = &
    \sum_{i=2}^{k+1}\frac{1}{i!(k+1-i)!}\sum_{j=1}^{i-1}\frac{i!}{j!(i-j)!}  \\
    \nonumber \\
    & = & 
    \frac{1}{(k+1)!}\sum_{i=2}^{k+1}\frac{(k+1)!}{i!(k+1-i)!}(2^i-2)
    \nonumber \\
    & = & 
     \frac{1}{(k+1)!}\left[\sum_{i=2}^{k+1}{k+1 \choose i}2^i-2\sum_{i=2}^{k+1}{k+1 \choose i}\right]
     \nonumber \\
    & = & 
    \frac{1}{(k+1)!}\left(3^{k+1}-2^{k+2}+1
    \right).
    \nonumber
\eeq
It follows from (\ref{first_d_k}), (\ref{c_k+1}) and (\ref{induction}) that
\[
(-1)^{k+1}\frac{\delta^{2k+1}}{(k+1)!}\left(3^{k+1}-2^{k+2}+1
    \right)+c_{k+1}(1-2^{k+2}+3^{k+1})=0.
\]
Therefore,
\[
c_{k+1}=(-1)^{k}\frac{\delta^{2k+1}}{(k+1)!},
\]
which completes the induction and hence proves (\ref{c_k}). 

Now, we have
\[
F(x) 
     =  \sum_{k=1}^\infty (-1)^{k+1}\frac{\delta^{2k-1}}{k!}x^k 
      =  
     \frac{1}{\delta}\left(1-e^{-\delta^2x}
     \right).
\]
Since $\lim_{x\to \infty}F(x)=1$, we obtain $\delta=1$. The proof is complete.
\end{proof}

It is not difficult to see that, conversely, if $X_1\sim \exp(1)$, then (\ref{eqn_21}) holds. Indeed, under the assumption of unit exponential parent variable,
for the pdf of  $X_{3:3}$ we obtain
\[
3F^2(x)f(x)  =  3(1-e^{-x})^2e^{-x}
      =  3\bar{F}(x)-6\bar{F}(2x)+3\bar{F}(3x),
\]
which is equivalent to (\ref{eqn_21}).

\section{Sums of density and distribution functions}\label{fourth}

In this section we will prove that (\ref{eqn_3}) is a sufficient condition for $X_1$ to be exponentially distributed. It is straightforward that (\ref{eqn_3}) is a necessary condition as well.

\begin{theorem}\label{thm3} Assume that $f$ is right-continuous at zero. If for $x>0$
\be  \label{eqn_31}
\sum_{j=1}^3 {3 \choose j}(-1)^{j-1}jf(jx)=\sum_{j=1}^3 {3 \choose j}(-1)^{j-1}j\bar{F}(jx),
\ee
then  $X_1\sim \exp(1)$.
\end{theorem}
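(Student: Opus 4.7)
After dividing by $3$ and using $\bar F = 1-F$, relation (\ref{eqn_31}) collapses to the pointwise identity
\[
f(x) - 2f(2x) + f(3x) \;=\; \bar F(x) - 2\bar F(2x) + \bar F(3x),\qquad x>0.
\]
Introducing $u:=f-\bar F$, this reduces to the single functional equation $u(x) - 2u(2x) + u(3x) = 0$ on $(0,\infty)$. Since $X_1\sim\exp(1)$ is equivalent to $f\equiv\bar F$ (the two coincide with $e^{-x}$), the goal becomes showing $u\equiv 0$.

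The key manoeuvre is to read the equation as the midpoint identity $u(2x) = (u(x)+u(3x))/2$, i.e., substituting $y=2x$,
\[
u(y) \;=\; \tfrac{1}{2}\,u(y/2) + \tfrac{1}{2}\,u(3y/2).
\]
Iterating this relation $n$ times and collapsing with Pascal's rule yields, by induction, the recurrent representation
\[
u(y) \;=\; \frac{1}{2^{n}}\sum_{k=0}^{n}\binom{n}{k}\,u\!\left(\frac{3^{k}y}{2^{n}}\right) \;=\; E\!\left[u\!\left(\frac{3^{K_{n}}y}{2^{n}}\right)\right],
\]
where $K_{n}\sim\mathrm{Binomial}(n,1/2)$. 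By the strong law of large numbers $K_{n}/n\to 1/2$ a.s., so $3^{K_{n}}/2^{n} = \exp(K_{n}\log 3 - n\log 2)$ behaves like $(\sqrt 3/2)^{n}\to 0$ a.s. Combined with the right-continuity of $u$ at $0$ (which is inherited from the hypothesis on $f$ and the continuity of $\bar F$), this forces $u(3^{K_{n}}y/2^{n})\to c:=f(0^{+})-1$ almost surely; passing to the limit in the expectation then yields $u(y)=c$ for every $y>0$.

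It remains to force $c=0$. From $f(x)=\bar F(x)+c$: if $c>0$ then $f\ge c$ on $(0,\infty)$, contradicting $\int f = 1$; if $c<0$ then $\bar F(x)\ge -c$ for all $x$, contradicting $\bar F(\infty)=0$. Hence $c=0$ and $f\equiv\bar F$. Differentiation (using $\bar F'=-f$) gives $f'=-f$, so $f(x)=f(0)e^{-x}$, and $\int_{0}^{\infty}f=1$ pins $f(0)=1$. Therefore $X_{1}\sim\exp(1)$.

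The hard part will be justifying the passage to the limit in the recurrent identity: although $3^{K_{n}}y/2^{n}\to 0$ almost surely, the integrand is evaluated along the geometrically spreading grid $\{3^{k}y/2^{n}:0\le k\le n\}$ whose right endpoint is $(3/2)^{n}y\to\infty$, so an interchange of limit and expectation requires some dominating control of $u$ at large arguments. This control has to be drawn from the functional equation itself together with $\int f = 1$ and $\bar F\in[0,1]$; once it is in place, dominated convergence finishes the argument.
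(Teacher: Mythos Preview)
Your reduction to $u(x)-2u(2x)+u(3x)=0$ for $u:=f-\bar F$ matches the paper (there $u$ is called $Q$), but the gap you flag at the end is genuine and is not a formality. The functional equation by itself is satisfied by every affine function; in particular $u(y)=y$ is a solution, and for it your identity $u(y)=E[u(3^{K_n}y/2^n)]$ reads $y=E[3^{K_n}y/2^n]$, which holds for every $n$ even though $3^{K_n}y/2^n\to 0$ almost surely. Thus for a legitimate solution of the equation the interchange of limit and expectation \emph{fails}. The needed control therefore cannot come ``from the functional equation itself''; it must come from the specific structure $u=f-\bar F$, and integrability of $f$ alone yields neither boundedness nor decay of $f$, hence no evident uniform bound on $u^{+}$ along the grid points $3^{k}y/2^{n}$ with $k$ close to $n$. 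As written, the argument is incomplete precisely at the step you single out, and you have not indicated how to close it.

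The paper sidesteps this difficulty by iterating only toward zero. Rewriting the relation as $Q(y)-Q(\frac{2}{3}y)=Q(\frac{2}{3}y)-Q(\frac{1}{3}y)$ and iterating, it reduces the common value to $\lim_{k\to\infty}\big[Q((\frac{2}{3})^{k}y)-Q((\frac{1}{3})^{k}y)\big]$, which vanishes by right-continuity at $0$; hence $Q(y)=Q(\frac{2}{3}y)$ for all $y>0$, and one more descent gives $Q\equiv Q(0^{+})$. Because only values of $Q$ near the origin are ever used, no large-argument domination is required. Your endgame (forcing the constant to be $0$ via integrability of $f$ and then solving $f=\bar F$) coincides with the paper's.
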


\begin{proof} The relation (\ref{eqn_31}) leads to
\be \label{thm411}
[f(3x)-\bar{F}(3x)]-[f(2x)-\bar{F}(2x)]=[f(2x)-\bar{F}(2x)]-[f(x)-\bar{F}(x)].
\ee
Denoting $Q(y)=f(y)-\bar{F}(y)$, we rewrite (\ref{thm411}) as
\[
Q(y)-Q\left(\frac{2}{3}y\right)=Q\left(\frac{2}{3}y\right)-Q\left(\frac{1}{3}y\right).
\]
Iterating this equation $k$ times and taking limit as $k\to \infty$, we obtain
\[
Q(y)-Q\left(\frac{2}{3}y\right)  =  Q\left(\frac{2}{3}y\right)-Q\left(\frac{1}{3}y\right)
     =  
   \lim_{k\to \infty} Q\left(\left(\frac{2}{3}\right)^ky\right)-Q\left(\left(\frac{1}{3}\right)^ky\right)
     =  0.
\]
This implies $Q(y)=Q(2y/3)$ and thus, 
\be \label{thm412}
Q(y)  =  Q\left(\frac{2}{3}y\right) 
     =  \lim_{k\to \infty} Q\left(\left(\frac{2}{3}\right)^ky\right) 
     =  f(0+)-\bar{F}(0+)
     =  f(0+)-1.
\ee
On the other hand,
\be \label{thm413}
\lim_{y\to \infty}f(y)=\lim_{y\to \infty}f(y)-\lim_{y\to \infty}\bar{F}(y) =
  \lim_{y\to \infty}Q(y)=f(0+)-1.
  \ee
  But since $f$ is integrable, we have $\lim_{y\to \infty}f(y)=0$, and therefore, by (\ref{thm412}) and (\ref{thm413}), $Q(x)=0$. Thus, $f(x)=\bar{F}(x)$ for every $x\ge 0$. This, in turn, implies $X_1\sim \exp(1)$.
  \end{proof}

\section{Sum and maximum of three variables}\label{fifth}

 It is known (e.g., Arnold et al. (2008), p.77) that if $X\sim \exp\{\lambda\}$, then
\be \label{property}
\sum_{j=1}^3 \frac{1}{j}X_j \stackrel{d}{=} X_{3:3} \quad \mbox{and}\quad X_{2:2}+\frac{1}{3}X_3\stackrel{d}{=} X_{3:3}.
\ee
We will prove that both relations in  (\ref{property}) are also characterization properties of the exponential distribution. Next lemma provides the key argument in  the proof of Theorem~1 in \cite{AV13} and of the theorem below.

\begin{lemma}\label{lemma1} If $F(0)=0$, the pdf $f$ has a Taylor series expansion for $x>0$, and
\be \label{lemma}
f^{(m)}(0)=\left[\frac{f'(0)}{f(0)}\right]^{m-1}f'(0), \qquad m=1,2,\ldots,
\ee
then $X_1\sim \exp\{\lambda\}$ for some  $\lambda>0$.https://www.overleaf.com/project/5d377afee73bb23f914b89d6
\end{lemma}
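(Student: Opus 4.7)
The plan is to show that the assumed recursion on $f^{(m)}(0)$ forces the Taylor series of $f$ to be the Taylor series of an exponential, and then to use integrability and the fact that $f$ is a pdf to pin down the rate.

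First I would introduce the ratio $\mu := f'(0)/f(0)$, which is well defined since the hypothesis $f^{(m)}(0)=[f'(0)/f(0)]^{m-1}f'(0)$ is vacuous or inconsistent unless $f(0)\neq 0$ (a quick preliminary remark should address this: if $f(0)=0$, taking $m\ge 2$ in the recursion forces $f^{(m)}(0)=0$ for all $m\geq 1$ as well, so the Taylor expansion would give $f\equiv 0$, contradicting that $f$ is a pdf). With $\mu$ in hand, the recursion simplifies to $f^{(m)}(0)=\mu^{m-1}\cdot\mu f(0)=\mu^m f(0)$ for every $m\ge 0$.

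Next, I would plug these coefficients into the Taylor series expansion of $f$ guaranteed by the hypothesis:
\[
f(x)=\sum_{m=0}^\infty \frac{f^{(m)}(0)}{m!}x^m=f(0)\sum_{m=0}^\infty \frac{(\mu x)^m}{m!}=f(0)\, e^{\mu x}.
\]
At this point the form of $f$ is already exponential; it only remains to identify the parameter. Since $f$ is a density on $[0,\infty)$, the integral $\int_0^\infty f(x)\,dx$ must be finite and equal to $1$. Finiteness forces $\mu<0$, so I would write $\mu=-\lambda$ with $\lambda>0$. Then $\int_0^\infty f(0)\, e^{-\lambda x}\,dx = f(0)/\lambda = 1$ gives $f(0)=\lambda$, so $f(x)=\lambda e^{-\lambda x}$, i.e.\ $X_1\sim \exp(\lambda)$.

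The proof is mostly bookkeeping; the one subtle point I would be careful about is the validity of the Taylor representation on the whole positive half-line (not just near zero). Since the hypothesis of the lemma already postulates a power-series representation of $f$ for $x>0$, and the coefficients turn out to be those of an entire function, the expansion converges everywhere and equals $f$ on the positive half-line by the identity theorem. This is the only place where one could run into trouble if one reads the hypothesis too weakly, so I would make the scope of the Taylor assumption explicit before extending from "near zero" to "on $(0,\infty)$".
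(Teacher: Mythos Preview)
Your proof is correct and follows essentially the same route as the paper: substitute the derivative condition into the Taylor expansion to recognize $f(x)=f(0)e^{\mu x}$, then use integrability and normalization to force $\mu=-\lambda<0$ and $f(0)=\lambda$. Your added remarks on why $f(0)\neq 0$ and on the domain of validity of the Taylor series are extra care that the paper's proof simply omits.
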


\begin{proof} For the Taylor series of $f(x)$, using (\ref{lemma}), we have for $x>0$
\[ 
f(x)  =  \sum_{m=0}^\infty \frac{f^{(m)}(0)}{m!}x^m 
     =  f(0)+f(0)\sum_{m=1}^\infty \left[\frac{f'(0)}{f(0)}\right]^{m}\frac{x^m}{m!}
     = 
    f(0)\exp\left\{\frac{f'(0)}{f(0)}x\right\}.
\]
Since $f(x)$ is a pdf, we have $f'(0)/f(0)<0$. Denoting $\lambda=-f'(0)/f(0)>0$ and setting $\int_0^\infty f(x)\, dx=1$, we obtain $\lambda=f(0)$. Therefore, $f(x)=\lambda e^{-\lambda x}$.  
\end{proof}

Next theorem can be obtained as a particular case of the results in \cite{YC16}. We include it here since it complements the other results for samples of size three given in Theorems~\ref{thm1}--\ref{thm3} and thus provides an easily reference.

\begin{theorem}\label{thm4} Assume the cdf  $F$ admits a power series representation
in a neighborhood of zero and $F(0)=0$.

(i) If
\be  \label{eqn_main1}
X_{2:2}+\frac{1}{3}X_3\stackrel{d}{=} X_{3:3},
\ee
then  $X_1\sim \exp\{\lambda\}$ for some  $\lambda>0$.

(ii) If
\be  \label{eqn_main2}
\sum_{j=1}^3 \frac{1}{j}X_j \stackrel{d}{=} X_{3:3},
\ee
then  $X_1\sim \exp\{\lambda\}$ for some  $\lambda>0$.
\end{theorem}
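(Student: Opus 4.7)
The plan for both parts is to reduce the claim to Lemma~\ref{lemma1}. Write the power series expansion $F(x)=\sum_{k\ge 1}c_k x^k$ (so $c_0=0$) and $f(x)=\sum_{k\ge 0}(k+1)c_{k+1}x^k$. Since $c_1=f(0)>0$ must hold for a density supported on $[0,\infty)$, the conclusion of Lemma~\ref{lemma1} (i.e.\ $f^{(m)}(0)=(f'(0)/f(0))^{m-1}f'(0)$) is equivalent to
\[
c_k=\frac{(2c_2)^{k-1}}{k!\,c_1^{k-2}},\qquad k\ge 2.
\]
Once this is established, Lemma~\ref{lemma1} gives $f(x)=c_1\exp\{(2c_2/c_1)x\}$, and integrability with $c_2<0$ then forces $c_1=\lambda$ and $c_2=-\lambda^2/2$ for $\lambda=-2c_2/c_1>0$.

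For part (i), convolving the densities of $X_{2:2}$ and $X_3/3$ reformulates (\ref{eqn_main1}) as the equality of cdfs
\[
\int_0^x F\bigl(3(x-y)\bigr)\cdot 2F(y)f(y)\,dy=F^3(x),\qquad x>0.
\]
I would substitute $F(3(x-y))=\sum_{k\ge 1}3^k c_k(x-y)^k$ and $2F(y)f(y)=(F^2)'(y)=\sum_{n\ge 2}n d_n y^{n-1}$, with $d_n=\sum_{i=1}^{n-1}c_i c_{n-i}$, and evaluate the convolution termwise via the Beta-type identity $\int_0^x(x-y)^k y^{n-1}\,dy=k!(n-1)!\,x^{k+n}/(k+n)!$. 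Matching the coefficient of $x^m$ on each side yields a recurrence. A direct check shows the cases $m=3$ and $m=4$ are automatically satisfied, so $c_2$ is initially a free parameter; from $m=5$ onward, the highest-index coefficient $c_{m-2}$ appears with nonzero coefficient (the $m=5$ case already gives $c_3=2c_2^2/(3c_1)$, matching the candidate formula), and it is uniquely determined by $c_1,\ldots,c_{m-3}$. Induction on $m$ then verifies that the candidate $c_k=(2c_2)^{k-1}/(k!\,c_1^{k-2})$ solves the recurrence, and Lemma~\ref{lemma1} closes the argument.

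For part (ii), the density of $\sum_{j=1}^{3}X_j/j$ is the triple convolution of $f(x)$, $2f(2x)$, and $3f(3x)$, so (\ref{eqn_main2}) becomes
\[
\int_0^x\!\int_0^{x-u} f(u)\,2f(2v)\,3f\bigl(3(x-u-v)\bigr)\,dv\,du=3F^2(x)f(x).
\]
The same strategy applies: expand both sides in power series, evaluate each iterated convolution integral via repeated Beta identities, and match coefficients of $x^m$. This again produces a recurrence for $(c_k)$ whose unique solution, verified inductively, is the target formula, after which Lemma~\ref{lemma1} gives $X_1\sim\exp(\lambda)$.

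The main obstacle in both parts is the combinatorial identity that collapses the induction step, an analogue of the sum $\sum_{i=2}^{k+1}\binom{k+1}{i}(2^i-2)=3^{k+1}-2^{k+2}+1$ used in (\ref{induction}). Here the identities mix powers of $2$ and $3$ with the rational factors $k!(n-1)!/(k+n)!$ coming from the Beta integrals, and in part~(ii) an extra layer of multinomial bookkeeping appears from the double convolution. A further subtlety absent from Theorem~\ref{thm2} is that the recurrence at level $m$ couples coefficients $c_1,\ldots,c_{m-2}$ rather than introducing a brand-new $c_m$, so the low orders $m=3,4$ must be checked separately to isolate $c_1$ and $c_2$ as the two degrees of freedom before the general induction takes over.
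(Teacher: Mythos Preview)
Your plan is correct and follows the same overall strategy as the paper: translate the distributional identity into an equation between analytic functions, extract a recurrence for the Taylor data at $0$, and prove by induction that the data match those of an exponential, then invoke Lemma~\ref{lemma1}. The one noteworthy difference is bookkeeping. You work at the cdf level and expand the convolution term by term via the Beta identity $\int_0^x(x-y)^ky^{n-1}\,dy=\dfrac{k!(n-1)!}{(k+n)!}x^{k+n}$; this introduces rational factorial weights into the recurrence and makes the closing combinatorial identity heavier than necessary. The paper instead stays at the density level, writing (i) as
\[
\int_0^x f(3y)\,G(x-y)\,dy=f(x)\int_0^x G(y)\,dy,\qquad G:=Ff,
\]
and simply differentiates $n$ times and sets $x=0$, which via Leibniz produces the clean relation $\sum_{i=1}^{n}3^{n-i}f^{(n-i)}(0)G^{(i-1)}(0)=\sum_{i=1}^{n}\binom{n}{i}f^{(n-i)}(0)G^{(i-1)}(0)$; the induction then closes with the elementary identity $3^{n-2}-\binom{n}{2}=\sum_{i=3}^{n}\bigl[\binom{n}{i}-3^{n-i}\bigr](2^{i-1}-1)$. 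Part~(ii) is handled identically after setting $H(z):=\int_0^z f(2x)f(3(z-x))\,dx$. Your low-order checks ($m=3,4$ automatic, $m=5$ yielding $c_3=2c_2^2/(3c_1)$) are right and correspond exactly to the paper's observation that (\ref{sum_eqn}) is vacuous for $n\le 3$; switching from Beta integrals to repeated differentiation at $0$ would let you avoid the ``extra layer of multinomial bookkeeping'' you anticipate.
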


\begin{proof} (i).
The pdf of the left-hand side of (\ref{eqn_main1}) is
\beq \label{LHS1}
f_{X_{2:2}+X_3/3}(x) & = & \int_0^x f_{X_3/3}(y)f_{X_{2:2}}(x-y)\, dy \\
    & = & \int_0^x 3f(3y)\frac{d}{dx}[F^2(x-y)]\, dy \nonumber \\
    & = & 6\int_0^x f(3y)F(x-y)f(x-y)\, dy . \nonumber
\eeq
For the pdf of the right-hand side of (\ref{eqn_main1}), we have
\be \label{RHS1}
f_{X_{3:3}}(x) 
     =  3F^2(x)f(x)  
    =  6f(x)\int_0^x F(y)f(y)\, dy. 
\ee
Let $G(x):=F(x)f(x)$. It follows from (\ref{LHS1}) and (\ref{RHS1}) that (\ref{eqn_main1}) is equivalent to
\be \label{eqn1}
\int_0^x f(3y)G(x-y)\, dy = f(x)\int_0^x G(y)\, dy.
\ee
Differentiating  the left-hand side of (\ref{eqn1}) $n$ times with respect to $x$, we obtain
\[
\frac{d^n}{dx^n}\int_0^x f(3y)G(x-y)\, dy
  =  \sum_{i=1}^{n}f^{(n-i)}(3x)G^{(i-1)}(0)
       +  \int_0^x f(3y)G^{(n)}(x-y)\, dy.
\]
Applying the Leibniz rule for the $n$th derivative of a product of two functions to the right-hand side of (\ref{eqn1}), we obtain
\[
\frac{d^n}{dx^n}\left[f(x)\int_0^x G(y)\, dy\right]
  = \sum_{i=1}^n {n \choose i}f^{(n-i)}(x) G^{(i-1)}(x)+f^{(n)}(x)\int_0^x G(y)\, dy.
 \]
 In the last two equations letting $x=0$, we have 
\be \label{sum_eqn}
\sum_{i=1}^{n}3^{n-i}f^{(n-i)}(0)G^{(i-1)}(0)
        =
   \sum_{i=1}^n {n \choose i}f^{(n-i)}(0) G^{(i-1)}(0).
   \ee
Since $G(0)=0$ and $G'(0)=f^2(0)$, the above equation is equivalent to
\be \label{sum_eqn2}
\hspace{-0.5cm}\left[3^{n-2}-{n \choose 2}\right] f^{(n-2)}(0)f^2(0)= \sum_{i=3}^{n}\left[{n \choose i}-3^{n-i}\right]f^{(n-i)}(0)G^{(i-1)}(0),
\ee
where $n\ge 4$.
We will prove that (\ref{sum_eqn2}) implies (\ref{lemma}).
Equation (\ref{lemma}) is trivially true for $m=1$. 
To proceed by induction, assume (\ref{lemma}) holds true for all $1\le m\le n-3$, where $n\ge 4$. We need to prove it for $m=n-2$. Using the induction assumption, it is not difficult to obtain for $j=1,2,\ldots, n-2$
\[
G^{(j)}(0)  =  \sum_{i=0}^j{j \choose i}F^{(i)}(0)f^{(j-i)}(0)
     =  f^2(0)\left[\frac{f'(0)}{f(0)}\right]^{j-1}(2^j-1).
\]
Therefore, using the induction assumption again, we have for $i=3,4, \ldots, n-1$
\be \label{miss_11}
f^{(n-i)}(0)G^{(i-1)}(0)   
 =  \left[\frac{f'(0)}{f(0)}\right]^{n-3}f'(0)f^2(0)(2^{i-1}-1). 
\ee
Substituting this in the right-hand side of (\ref{sum_eqn2}) yields 
\[
\hspace{-0.3cm}\left[3^{n-2}-{n \choose 2}\right] f^{(n-2)}(0)
     =
    \left[\frac{f'(0)}{f(0)}\right]^{n-3}\! \! f'(0)\sum_{i=3}^{n}\left[{n \choose i}-3^{n-i}\right](2^{i-1}-1).
\]
To complete the proof of (\ref{lemma}), it is sufficient to show that
\[
3^{n-2}-{ n \choose 2}=\sum_{i=3}^{n}\left[{n \choose i}-3^{n-i}\right](2^{i-1}-1),
\]
which can be easily verified. This proves (\ref{lemma}). 
The claim in (i) follows from (\ref{lemma}) and the lemma.
\end{proof}

\begin{proof} (ii). Equation (\ref{eqn_main2}) is equivalent to
\be \label{e_main2}
6\int_0^zf(y)\int_0^{z-y}f(2x)f(3(z-y-x))\, dx \, dy = 6f(z)\int_0^z F(y)f(y)\, dy.
\ee
Denoting
\be \label{H}
 H(z-y):= \int_0^{z-y}f(2x)f(3(z-y-x))\, dx,
\ee
we write (\ref{e_main2}) as
\be \label{e_main3}
\int_0^z f(y)H(z-y)\, dy = f(z)\int_0^z G(y)\, dy.
\ee
Similarly to the proof of (i), differentiating $n$ times both sides of (\ref{e_main3}) with respect to $z$ and setting $z=0$, we have
\[
\sum_{i=1}^{n-1}f^{(n-1-i)}(0)H^{(i)}(0)=\sum_{i=1}^{n-1}{n \choose i+1}f^{(n-1-i)}(0)G^{(i)}(0).
\]
Since $H'(0)=G'(0)=f^2(0)$,  the last equation can be written for $k=n-1$ as
\be \label{new_main}
\hspace{-0.3cm} \left[1-{k+1 \choose 2}\right]f^{(k-1)}(0)f^2(0)=\sum_{i=2}^{k}\left[{k+1 \choose i+1}G^{(i)}(0)-H^{(i)}(0)\right]f^{(k-i)}(0).
\ee
Now we are in a position to prove (\ref{lemma}) by induction.
 (\ref{lemma}) holds true for $m=1,2,\ldots, k-2$.
Differentiating  (\ref{H}) 
with respect to $z$ and setting $z=y$, we have
\be \label{d_H}
H^{(n)}(0)  =  \sum_{i=1}^{n}2^{n-i}f^{(n-i)}(0)3^{i-1}f^{(i-1)}(0).
\ee
Under the induction assumption, (\ref{d_H}) implies for $j=1,2, \ldots, n-2$
\[
H^{(j)}(0)=
    \left[\frac{f'(0)}{f(0)}\right]^{j-1}f^2(0)\left(3^j-2^j\right).
\]
Using the induction assumption again, we have for $i=3,4, \ldots, n-1$
\[
f^{(n-i)}(0)H^{(i-1)}(0)=
\left[\frac{f'(0)}{f(0)}\right]^{n-3}f'(0)f^2(0)\left(3^{i-1}-2^{i-1}\right).
\]
Recalling (\ref{miss_11}) from the proof of (i), we rewrite (\ref{new_main}) as (note that $i=n$ corresponds to a 0 term)
\[
\left[1-{n \choose 2}\right]f^{(n-2)}(0)
=
\left[\frac{f'(0)}{f(0)}\right]^{n-3}f'(0)\sum_{i=3}^{n}\left[{n \choose i}(2^{i-1}-1)-(3^{i-1}-2^{i-1})\right]
\]
Thus, to prove (\ref{lemma}) for $k=n-2$ it is sufficient to show that
\[
1-{n \choose 2}=
\sum_{i=3}^{n}\left[{n \choose i}(2^{i-1}-1)-(3^{i-1}-2^{i-1})\right],
\]
which verifies. 
This proves (\ref{lemma}), which referring to the lemma, completes the proof of (ii).
\end{proof}

\section{Example}\label{example}

We will illustrate a possible application of Theorem~\ref{thm4} with an example (see also \cite{AV13}). Assume we have a simple random sample $X_1, X_2, \ldots, X_n$ for $n\ge 6$. Let us randomly divide the data set into six subsets, relabeled as  
\[
U_1, U_2,\ldots, U_{n/6}, \qquad V_1, V_2,\ldots, V_{n/6}, \qquad W_1, W_2,\ldots, W_{n/6}, 
\]
\[
X_1, X_2, \ldots, X_{n/6}, \qquad Y_1, Y_2,\ldots, Y_{n/6}, \qquad Z_1, Z_2, \ldots, Z_{n/6}.
\]
Define for $i=1,2,\ldots, n/4$
\[
R_i:= U_i+\frac{1}{2}V_i+\frac{1}{3}W_i, \quad      S_i:=\max\{U_i,V_i\}+\frac{1}{3}W_i \quad \mbox{and}\quad T_i:=\max\{X_i, Y_i, Z_i\}.
\]
Then, according to Theorem~\ref{thm4}, the $R$'s, the $S$'s, and the $T$'s will have a common distribution if and only if the original $X$'s follow an exponential distribution.  

Let us simulate a sample of size $n=180$ from a parent variable with $\exp (1)$ distribution. The values of $R_i$, $S_i$, and $T_i$ for $i=1,2,\ldots, 30$ are presented in Table~\ref{tab1}. 

\vspace*{5mm}
\begin{table}[h]
\centering
\begin{tabular}{|c|c|c|c|c|c|c|c|c|c|c|}
\hline
  R  & 3.56 & 0.70 & 0.62 & 3.33 & 0.30 & 0.78 & 2.29 & 0.97  & 1.59 & 0.50\\
\hline
   & 0.83 & 2.27 & 0.69 & 2.95 & 0.32 & 4.12 & 0.74 & 0.91 & 2.66 & 0.48\\
   \hline
   & 2.87 & 2.19 & 2.32 & 1.08 & 3.69 & 1.98 & 1.13 & 1.32 & 3.37 & 2.73\\
\hline
S & 2.98 & 1.23 &  0.77 & 2.75 & 0.44 & 0.75 & 1.97 & 1.08 & 1.43 & 0.50 \\
\hline
 & 0.76 & 1.65 & 0.58 & 2.39 & 0.27 & 3.41 & 0.73 & 0.89 & 2.63 & 0.40 \\
\hline
  & 2.22 & 1.87 & 4.25 & 1.07 & 2.72 & 1.74 & 1.07 & 1.11 & 2.71 & 3.87 \\
  \hline
T  & 2.07 & 0.60 & 0.97 & 0.47 & 2.84 & 0.84 & 1.02 & 1.84 & 0.57 & 2.88\\
\hline
 & 1.39 & 1.92  & 8.46 & 1.77 & 2.60 & 1.42 & 1.50 & 0.47 & 0.26 & 2.17 \\
\hline
 & 1.92 & 1.67  & 2.87 & 1.06 & 2.24 & 6.63 & 0.52 & 1.09 & 1.33  & 1.07 \\
\hline
\end{tabular}
\caption{Values $R_i$, $S_i$, and $T_i$ for $i=1,2,\ldots, 30$.
\label{tab1}}
\end{table}
\vspace*{5mm}

Using the non-parametric two-sample Wicoxon rank test, we compare the sample distribution functions of the $R$'s and $T$'s on one hand and the $S$'s and $T$'s on another. The test results provide evidence supporting an exponential underlying distribution. Namely, the hypothesis that the distributions of the $R$'s and the $T$'s are the same cannot be rejected with p-value 0.7635 (W=471). The hypothesis that the distributions of the $S$'s and the $T$'s are the same cannot be rejected with p-value 0.9357 (W=444).

\section{Concluding remarks}\label{sixth}
In this paper we proved characterizations of the exponential distribution conjectured by Arnold and Villase\~{n}or in \cite{AV13}. Furthermore, under the assumptions of Theorem~\ref{thm1} and using the same technique of proof, it can be seen that if $X_1+\frac{1}{2}X_2+\frac{1}{3}X_3$ has as its density any one of the following seven forms, then $X_i$'s are exponential:
\nbeq
3f(x)-6f(2x)+3\bar{F}(3x), & & 
3f(x)-6\bar{F}(2x)+3f(3x),  \\
3\bar{F}(x)-6f(2x)+3f(3x), & & 
3f(x)-6\bar{F}(2x)+3\bar{F}(3x),  \\
3F(x)-6f(2x)+3\bar{F}(3x), & & 
3\bar{F}(x)-6\bar{F}(2x)+3f(3x),  \\
3\bar{F}(x)-6\bar{F}(2x)+3\bar{F}(3x). & & 
\neeq
Likewise, under the assumptions of Theorem~\ref{thm2} and using the same technique of proof, it can be obtained that if $X_{3:3}$ has as its density any one of the preceding seven forms, then $X_i$'s are exponential. 

The results presented here can be extended in several directions. Naturally, one would like to explore the general case of samples of size $n$ for any $n\ge 4$. As we mentioned earlier, generalizations of Theorem~\ref{thm4} for arbitrary sample size are proved in \cite{YC16}. Here we would like to propose as open problems the following two characterizations, which would extend Theorem~\ref{thm1} and Theorem~\ref{thm2}, respectively.

 \begin{proposition} Let $X_1, X_2, \ldots, X_n$ be i.i.d. random variables, where $n\ge 4$. Assume $\varphi(t)$ is finite for all $t$ in a neighbourhood of zero. If for $x>0$
\[
\sum_{j=1}^n \frac{1}{j}X_j\quad \mbox{has pdf}\quad \sum_{j=1}^n { n \choose j}(-1)^{j-1}jf(jx),
\]
then $X_1\sim \exp(\lambda)$ for some $\lambda>0$.
\end{proposition}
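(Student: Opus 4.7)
The plan is to mirror the proof of Theorem~\ref{thm1}. Applying the Laplace transform to both sides of the hypothesis and proceeding as in (\ref{s_d_eqn1}) gives
\[
\prod_{j=1}^n \varphi\!\left(\frac{t}{j}\right) = \sum_{j=1}^n \binom{n}{j}(-1)^{j-1}\varphi\!\left(\frac{t}{j}\right).
\]
Dividing by $\prod_{j=1}^n \varphi(t/j)$ and writing $\alpha(t):=1/\varphi(t)=\sum_{k\ge 0} a_k t^k$ as in (\ref{notation4}), this becomes
\[
1 = \sum_{j=1}^n \binom{n}{j}(-1)^{j-1}\prod_{i \neq j}\alpha\!\left(\frac{t}{i}\right).
\]
It would then suffice to show $a_0=1$, $a_1=\lambda>0$, and $a_k=0$ for every $k\ge 2$, since then $\alpha(t)=1+\lambda t$ and $X_1\sim\exp(\lambda)$.

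Clearly $a_0=1$; the $k=1$ coefficient equation turns out to be automatic via the classical identity $\sum_{j=1}^n\binom{n}{j}(-1)^{j-1}/j=H_n$, so $a_1$ is undetermined and identified with $\lambda>0$ through $F(0)=0$ exactly as in the proof of Theorem~\ref{thm1}. For $k\ge 2$ I would proceed by induction on $k$, assuming $a_m=0$ for $2\le m\le k-1$ (vacuous when $k=2$). Expansion by the Cauchy formula shows that the only multi-indices contributing to $[t^k]\prod_{i\neq j}\alpha(t/i)$ are those whose entries lie in $\{0,1,k\}$, yielding
\[
[t^k]\prod_{i\neq j}\alpha\!\left(\frac{t}{i}\right) = a_k \sum_{i\neq j}\frac{1}{i^k} \;+\; \lambda^k\, e_k\!\bigl(\{1/i\}_{i\neq j}\bigr),
\]
where $e_k$ denotes the $k$-th elementary symmetric polynomial in $n-1$ variables. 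Taking the $\binom{n}{j}(-1)^{j-1}$-weighted sum over $j$ and equating to zero produces a linear equation for $a_k$.

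The crux is then two identities. The first is that the $\lambda^k$ term vanishes: $\sum_{j=1}^n\binom{n}{j}(-1)^{j-1}e_k\bigl(\{1/i\}_{i\neq j}\bigr)=0$ for every $k\ge 1$. I would prove this by observing
\[
\prod_{i\neq j}\!\left(1+\frac{z}{i}\right) = \frac{j\,\prod_{i=1}^n(i+z)}{n!\,(j+z)},
\]
and then applying the partial-fraction identity $\sum_{j=1}^n\binom{n}{j}(-1)^{j-1}\,j/(j+z) = n!/\prod_{i=1}^n(i+z)$ to see that $\sum_j\binom{n}{j}(-1)^{j-1}\prod_{i\neq j}(1+z/i)$ collapses to the constant $1$; hence every coefficient of $z^k$ for $k\ge 1$ vanishes. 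The second identity, and the expected main obstacle, is that the coefficient multiplying $a_k$ is nonzero: $H_n^{(k)}\ne S_k$, where $H_n^{(k)}:=\sum_{i=1}^n i^{-k}$ and $S_k:=\sum_{j=1}^n\binom{n}{j}(-1)^{j-1}/j^k$. Using $1/j^k=\frac{1}{(k-1)!}\int_0^1(-\log u)^{k-1}u^{j-1}\,du$ one obtains
\[
S_k - H_n^{(k)} = \frac{1}{(k-1)!}\int_0^1 \bigl[(-\log(1-u))^{k-1} - (-\log u)^{k-1}\bigr]\,\frac{1-u^n}{1-u}\,du.
\]
The bracketed factor is antisymmetric about $u=1/2$, so only the antisymmetric part of $(1-u^n)/(1-u)$ under $u\mapsto 1-u$ contributes, and on $(1/2,1)$ both of these are strictly positive; hence $S_k>H_n^{(k)}$ for $k\ge 2$ and $n\ge 2$. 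This forces $a_k=0$ and completes the induction.
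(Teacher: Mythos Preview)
The paper does not prove this statement: Proposition~1 is explicitly listed in Section~\ref{sixth} as an \emph{open problem}, so there is no proof in the paper to compare against. Your proposal therefore does more than the paper does; it would settle the conjecture.

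Your argument is sound and follows the template of Theorem~\ref{thm1} faithfully. The two genuinely new ingredients you supply are exactly the ones needed for general $n$. First, the vanishing of the ``pure $\lambda^k$'' part via the generating-function collapse
\[
\sum_{j=1}^n\binom{n}{j}(-1)^{j-1}\prod_{i\neq j}\Bigl(1+\tfrac{z}{i}\Bigr)
=\frac{\prod_{i=1}^n(i+z)}{n!}\sum_{j=1}^n\frac{(-1)^{j-1}j\binom{n}{j}}{j+z}=1
\]
is correct (the inner sum is precisely the partial-fraction expansion of $n!/\prod_i(i+z)$). Second, the nonvanishing of the coefficient of $a_k$, i.e.\ $S_k\neq H_n^{(k)}$ for $k\ge 2$, is the real crux, and your integral-symmetrisation argument works: after the substitution $u\mapsto 1-u$ in $S_k$ one indeed gets
\[
S_k-H_n^{(k)}=\frac{1}{(k-1)!}\int_{1/2}^{1}\bigl[(-\log(1-u))^{k-1}-(-\log u)^{k-1}\bigr]\Bigl[\tfrac{1-u^n}{1-u}-\tfrac{1-(1-u)^n}{u}\Bigr]\,du,
\]
and both bracketed factors are strictly positive on $(1/2,1)$ for $k\ge 2$, $n\ge 2$ (the second because $\sum_{i=0}^{n-1}u^i>\sum_{i=0}^{n-1}(1-u)^i$). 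Hence $S_k>H_n^{(k)}$, the coefficient of $a_k$ is nonzero, and the induction closes. The only cosmetic remark is that for $k\ge n$ the elementary symmetric term $e_k$ is zero automatically, so that case is even easier; your argument covers it regardless.
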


 \begin{proposition}  Let $X_1, X_2, \ldots, X_n$ be i.i.d. random variables, where $n\ge 4$. Assume the cdf  $F$ has a power series representation in a neighborhood of zero. If for $x>0$ 
\[
X_{n:n} \quad \mbox{has pdf}\quad \sum_{j=1}^n { n \choose j}(-1)^{j-1}j\bar{F}(jx),
\]
then $X_1\sim \exp(1)$.
\end{proposition}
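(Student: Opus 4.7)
The plan is to adapt the power-series argument of Theorem~\ref{thm2} to arbitrary $n$. Using $\bar{F}=1-F$ together with the identity $\sum_{j=1}^n{n \choose j}(-1)^{j-1}j=0$ (valid for $n\ge 2$), the hypothesis rewrites as
\[
nF^{n-1}(x)f(x)+\sum_{j=1}^n{n \choose j}(-1)^{j-1}jF(jx)=0.
\]
Substituting $F(x)=\sum_{k\ge 1}c_k x^k$ and applying the Cauchy formula for an $n$-fold product, equating coefficients of $x^k$ gives, for every $k\ge 0$,
\[
n\!\!\!\!\!\!\sum_{\substack{i_1+\cdots+i_n=k\\ i_1,\ldots,i_{n-1}\ge 1,\ i_n\ge 0}}\!\!\!\!\!\! c_{i_1}\cdots c_{i_{n-1}}(i_n+1)c_{i_n+1}+c_k\sum_{j=1}^n{n \choose j}(-1)^{j-1}j^{k+1}=0.
\]
Since the sum $\sum_{j=1}^n{n \choose j}(-1)^{j-1}j^{k+1}$ vanishes for $k+1<n$ and equals $(-1)^{n+1}n!$ at $k=n-1$, and the multi-index sum also vanishes for $k<n-1$ (as $F^{n-1}f$ has minimal degree $n-1$), the first nontrivial equation is at $k=n-1$ and yields $c_{n-1}=(-1)^n c_1^n/(n-1)!$.

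The combinatorial core of the inductive step, generalizing (\ref{induction}), is the inclusion--exclusion identity
\[
\sum_{\substack{i_1+\cdots+i_n=k+1\\ i_1,\ldots,i_{n-1}\ge 1}}\frac{1}{i_1!\cdots i_n!}=\frac{1}{(k+1)!}\sum_{r=0}^{n-1}(-1)^r{n-1 \choose r}(n-r)^{k+1},
\]
which, via ${n-1 \choose r}=\frac{n-r}{n}{n \choose r}$, rewrites as $\frac{1}{n(k+1)!}\sum_{r=0}^n(-1)^r{n \choose r}(n-r)^{k+1}$---exactly the combinatorial factor needed to cancel against the Stirling-type sum in the recursion. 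Feeding the ansatz $c_k=(-1)^{k-1}/k!$ into the multi-index sum and matching with this factor, the inductive step closes just as in Theorem~\ref{thm2}, forcing $F(x)=1-e^{-x}$.

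The main obstacle lies in fixing the initial coefficients $c_1,\ldots,c_{n-2}$. Setting $c_k=(-1)^{k-1}\delta^{2k-1}/k!$ with $\delta=c_1$ and comparing to $c_{n-1}=(-1)^n c_1^n/(n-1)!$ forces $\delta^{n-3}=1$; for $n\ge 4$ this pins $\delta=1$, whereas for $n=3$ it is vacuous (which is precisely why $\delta$ remains free in Theorem~\ref{thm2}, with $F(\infty)=1$ closing the argument). However, all equations at levels $k<n-1$ are trivially $0=0$, so the intermediate coefficients $c_2,\ldots,c_{n-2}$ are \emph{a priori} undetermined by the recursion---an entirely new difficulty absent in the $n=3$ case. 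To overcome it I would examine the equations at levels $k\ge n$ and attempt to show that, combined with the normalization $F(\infty)=1$ and the cdf-validity (monotonicity and $0\le F\le 1$) of the resulting formal series, they impose enough nonlinear consistency conditions to force $c_j=(-1)^{j-1}/j!$ for every $j\le n-2$. A concrete computation for $n=4$ (only $c_2$ free) would be the natural first step and should indicate whether such a global uniqueness argument succeeds or whether an additional regularity hypothesis on $F$ is in fact required for the proposition to hold as stated.
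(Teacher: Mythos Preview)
The paper does \emph{not} prove this proposition: it is listed in Section~\ref{sixth} explicitly as an open problem, proposed as a conjectural extension of Theorem~\ref{thm2}. So there is no ``paper's own proof'' to compare against, and your task was in effect to attack an open question.

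Your set-up is correct. The rewriting via $\sum_{j=1}^n\binom{n}{j}(-1)^{j-1}j=0$ is valid, the coefficient equation you display is the right one, and your use of the Stirling-type vanishing $\sum_{j=1}^n\binom{n}{j}(-1)^{j-1}j^{k+1}=0$ for $k+1<n$ together with the minimal degree $n-1$ of $F^{n-1}f$ correctly shows that the equations at levels $k<n-1$ are vacuous. You have therefore put your finger precisely on the genuine obstruction: for $n\ge 4$ the coefficients $c_2,\dots,c_{n-2}$ are not determined at their own level, whereas for $n=3$ the recursion is immediately effective from $k=2$ onward. This is exactly why the paper leaves the statement open.

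Two comments on the rest. First, your paragraph about $\delta$ is circular: you \emph{assume} $c_k=(-1)^{k-1}\delta^{2k-1}/k!$ for $k\le n-2$ and then check consistency with the derived value of $c_{n-1}$, but the whole difficulty is that nothing so far forces this ansatz on $c_2,\dots,c_{n-2}$. That passage should be deleted or reframed as a heuristic. Second, the honest status of your proposal is that it is a research \emph{plan}, not a proof: the claim that the nonlinear equations at levels $k\ge n$, together with $F(\infty)=1$ and monotonicity, pin down $c_2,\dots,c_{n-2}$ uniquely is asserted but not established. Carrying out the $n=4$ case explicitly---where only $c_2$ is free and the first two nontrivial equations (at $k=3$ and $k=4$) give a polynomial system in $c_1,c_2$---would be a concrete and worthwhile next step, and would either produce a proof for $n=4$ or exhibit a spurious formal solution showing that additional hypotheses are needed.
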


\begin{acknowledgments}
This work has been supported by the National Scientific Foundation of Bulgaria at the Ministry of Education and Science, grant No KP-6-H22/3. The author acknowledges the valuable suggestions from the referees and the editor.
\end{acknowledgments}

\end{document}